\definecolor{ColBlack}{RGB}{0,0,0} 
\definecolor{ColWhite}{RGB}{255,255,255} 
\definecolor{Col1}{RGB}{133,6,6} 
\definecolor{Col2}{RGB}{198,8,0} 
\definecolor{Col3}{RGB}{174,74,52} 
\definecolor{Col4}{RGB}{103,113,121} 
\definecolor{Col5}{RGB}{90,94,107} 
\definecolor{Col6}{RGB}{70,63,50} 
\numberwithin{equation}{section}
\def\l@section{\@tocline{1}{3pt}{1pc}{5pc}{}}
\def\l@subsection{\@tocline{2}{2pt}{2pc}{5pc}{}}
\newtheorem{Theorem}{Theorem}[section]
\newtheorem{Proposition}[Theorem]{Proposition}
\newtheorem{Lemma}[Theorem]{Lemma}
\renewcommand{\leq}{\leqslant}
\renewcommand{\geq}{\geqslant}
\title{Generalizations of the associative operad \\
and convergent rewrite systems}
\titlerunning{Generalizations of the associative operad and convergent
rewrite systems}
\date{\today \\}
\author{Cyrille Chenavier\inst{1}\thanks{\tt cyrille.chenavier@u-pem.fr}
\and Christophe Cordero\inst{1}\thanks{\tt christophe.cordero@u-pem.fr}
\and Samuele Giraudo\inst{1}\thanks{\tt samuele.giraudo@u-pem.fr}}
\authorrunning{Chenavier, Cordero, Giraudo}
\institute{Université Paris-Est, LIGM (UMR $8049$), CNRS, ENPC, ESIEE
Paris, UPEM, F-$77454$, Marne-la-Vallée, France}
\newcommand{\Hide}[1]{\textcolor{Col4}{\tt [hidden]}}
\newcommand{\Def}[1]{\textcolor{Col3}{\em #1}}
\tikzstyle{Centering}=[{baseline={([yshift=-0.5ex]current
\newcommand{\Oca}{\mathcal{O}}
\newcommand{\Rfr}{\mathfrak{r}}
\newcommand{\Sfr}{\mathfrak{s}}
\newcommand{\Tfr}{\mathfrak{t}}
\newcommand{\Zero}{\mathtt{0}}
\newcommand{\Two}{\mathtt{2}}
\newcommand{\HilbertSeries}{\mathcal{H}}
\newcommand{\Mag}{\mathsf{Mag}}
\newcommand{\As}{\mathsf{As}}
\newcommand{\CAs}[1]{\mathsf{CAs}^{(#1)}}
\newcommand{\PrefixWord}{\mathrm{p}}
\newcommand{\Deg}{\mathrm{deg}}
\DeclareMathOperator{\Product}{\star}
\DeclareMathOperator{\Congr}{\equiv}
\DeclareMathOperator{\Rew}{\to}
\DeclareMathOperator{\RewContext}{\Rightarrow}
\newcommand{\CongrCAs}[1]{\Congr^{(#1)}}
\tikzstyle{Node}=[circle,draw=Col1!80,fill=Col1!8,inner sep=1pt,
\tikzstyle{Edge}=[draw=Col2!80,cap=round,thick]
\tikzstyle{Leaf}=[rectangle,draw=ColBlack!70,fill=ColBlack!16,
\tikzstyle{NodeST}=[font=\footnotesize]
\begin{document}

\maketitle

\vspace{-1em}

\begin{abstract}
    The associative operad is the quotient of the magmatic operad by
    the operad congruence identifying the two binary trees of degree
    $2$. We introduce here a generalization of the associative operad
    depending on a nonnegative integer $d$, called $d$-comb associative
    operad, as the quotient of the magmatic operad by the operad
    congruence identifying the left and the right comb binary trees of
    degree $d$. We study the case $d = 3$ and provide an orientation
    of its space of relations by using rewrite systems on trees and
    the Buchberger algorithm for operads to obtain a convergent
    rewrite system.
\end{abstract}

\section*{Introduction}
Associative algebras are spaces endowed with a binary product $\Product$
satisfying among others the associativity law
\begin{math}
    (x_1 \Product x_2) \Product x_3 = x_1 \Product (x_2 \Product x_3)
\end{math}.
It is well-known that the associative algebras are representations of 
the associative (nonsymmetric) operad $\As$. This operad can be seen as 
the quotient of the magmatic operad $\Mag$ (the free operad of binary
trees on the binary generator~$\Product$) by the operad congruence
$\Congr$ satisfying
\begin{equation} \label{equ:congruence_as}
    \begin{tikzpicture}[xscale=.24,yscale=.24,Centering]
        \node(0)at(0.00,-3.33){};
        \node(2)at(2.00,-3.33){};
        \node(4)at(4.00,-1.67){};
        \node[NodeST](1)at(1.00,-1.67)
            {\begin{math}\Product\end{math}};
        \node[NodeST](3)at(3.00,0.00)
            {\begin{math}\Product\end{math}};
        \draw[Edge](0)--(1);
        \draw[Edge](1)--(3);
        \draw[Edge](2)--(1);
        \draw[Edge](4)--(3);
        \node(r)at(3.00,1.5){};
        \draw[Edge](r)--(3);
    \end{tikzpicture}
    \Congr
    \begin{tikzpicture}[xscale=.24,yscale=.24,Centering]
        \node(0)at(0.00,-1.67){};
        \node(2)at(2.00,-3.33){};
        \node(4)at(4.00,-3.33){};
        \node[NodeST](1)at(1.00,0.00)
                {\begin{math}\Product\end{math}};
        \node[NodeST](3)at(3.00,-1.67)
                {\begin{math}\Product\end{math}};
        \draw[Edge](0)--(1);
        \draw[Edge](2)--(3);
        \draw[Edge](3)--(1);
        \draw[Edge](4)--(3);
        \node(r)at(1.00,1.5){};
        \draw[Edge](r)--(1);
    \end{tikzpicture}\,.
\end{equation}
These two binary trees are the syntax trees of the expressions appearing
in the above associativity law.

In a more combinatorial context and regardless of the theory of operads,
the Tamari order is a partial order on the set of the binary trees
having a fixed number of internal nodes $d$. This order is generated by
the covering relation consisting in rewriting a tree $\Tfr$ into a tree
$\Tfr'$ by replacing a subtree of $\Tfr$ of the form of the left member
of~\eqref{equ:congruence_as} into a tree of the form of the right member
of~\eqref{equ:congruence_as}. This transformation is known in a computer
science context as the right rotation operation~\cite{Knu98} and
intervenes in algorithms involving binary search trees~\cite{AVL62}. The
partial order hence generated by the right rotation operation is known
as the Tamari order~\cite{Tam62} and has a lot of combinatorial and
algebraic properties (see for instance~\cite{HT72,Cha06}).

A first connection between the associative operad and the Tamari order
is based upon the fact that the orientation of~\eqref{equ:congruence_as}
from left to right provides a convergent orientation (a terminating and 
confluent rewrite relation) of the congruence $\Congr$. The normal 
forms of the rewrite relation induced by the rewrite rule obtained by 
orienting~\eqref{equ:congruence_as} from left to right are right comb 
binary trees and are hence in one-to-one correspondence with the 
elements of~$\As$.

This work is intended to be a first strike in the study of the eventual
links between the Tamari order and some quotients of the operad $\Mag$.
In the long run, we would like to study quotients $\Mag/_{\Congr}$ of
$\Mag$ where $\Congr$ is an operad congruence generated by equivalence
classes of trees of a fixed degree. In particular, we would like to know
if $\Congr$ is generated by equivalence classes of trees forming
intervals of the Tamari order leads to algebraic properties for
$\Mag/_{\Congr}$ (like the description of orientations of its space of
relations, nice bases and Hilbert series).

We focus here on one of these quotients $\CAs{3}$ which is the operad
describing the category of the algebras equipped with a binary product
$\Product$ and subjected to the relation
\begin{math}
    ((x_1 \Product x_2) \Product x_3) \Product x_4
    =
    x_1 \Product (x_2 \Product (x_3 \Product x_4))
\end{math}.
This is a kind of associativity law in higher degree $d = 3$. This
operad is generated by an equivalence class of trees which is not an
interval for the Tamari order. As preliminary computer experiments show,
$\CAs{3}$ has oscillating first dimensions
(see~\eqref{equ:dimensions_CAs_3}), what is rather unusual among all
known operads. In this paper, we provide an orientation of the space of
relations of $\CAs{3}$. For this, we use rewrite systems on
trees~\cite{BN98} and the Buchberger algorithm for operads~\cite{DK10}.

This text is presented as follows. Section~\ref{sec:operad_Mag} contains
preliminaries about the magmatic operad and rewrite relations on trees.
In Section~\ref{sec:CAs_d}, we define the operad $\CAs{3}$ as a
particular case of a more general construction of generalizations
$\CAs{d}$, $d \geq 1$, of $\As$. Finally, Section~\ref{sec:CAs_3}
contains the orientation of the space of relations of
$\CAs{3}$ (Theorem~\ref{thm:convergent_rewrite_rule_CAs_3}). As
consequences, we obtain for $\CAs{3}$
the description of one of its Poincaré-Birkhoff-Witt bases
(Proposition~\ref{prop:PBW_basis_CAs_3}) and the description of
its Hilbert series (Proposition~\ref{prop:Hilbert_series_CAs_3}).

\section{The magmatic operad, quotients, and rewrite relations}
\label{sec:operad_Mag}
We consider nonsymmetric set-theoretic operads. Let $\Oca$ be such an
operad. We denote respectively by $\circ_i$ and $\circ$ the partial and
complete compositions of $\Oca$. For any $n \geq 1$, $\Oca(n)$ is the
set of the elements $x$ of $\Oca$ of arity $|x| = n$. We denote
by $\Mag$ the magmatic operad, that is the free operad over one binary
generator $\Product$, and we represent the elements of $\Mag$ by binary
trees. The \Def{arity} $|\Tfr|$ (resp. \Def{degree} $\Deg(\Tfr)$) of a
binary tree $\Tfr$ is its number of leaves (resp. internal nodes).
Given a binary tree $\Tfr$, we denote by $ \PrefixWord(\Tfr)$ the
\Def{prefix word} of $\Tfr$, that is the word on $\{\Zero, \Two\}$
obtained by a left to right depth-first traversal of $\Tfr$ and by
writing $\Zero$ (resp. $\Two$) when a leaf (resp. an internal node) is
encountered. The set of all words on $\{\Zero, \Two\}$ is endowed with
the lexicographic order $\leq$ induced by $\Zero < \Two$.

If $\Rew$ is a rewrite rule on $\Mag$ such that $\Sfr \Rew \Sfr'$ 
implies $|\Sfr| = |\Sfr'|$, we denote by $\RewContext$ the
\Def{rewrite relation induced} by $\Rew$. Formally we have
\begin{math}
    \Tfr\circ_i\left(\Sfr\circ\left[\Rfr_1,\dots,\Rfr_{n}\right]\right)
    \RewContext
    \Tfr\circ_i\left(\Sfr'\circ\left[\Rfr_1,\dots,\Rfr_{n}\right]\right)
\end{math},
if $\Sfr \Rew \Sfr'$ where $n = |\Sfr|$, and $\Tfr$, $\Rfr_1$, \dots,
$\Rfr_n$ are binary trees. In other words, one has
$\Tfr \RewContext \Tfr'$ if it is possible to obtain $\Tfr'$ from $\Tfr$
by replacing a subtree $\Sfr$ of $\Tfr$ by $\Sfr'$ whenever
$\Sfr \Rew \Sfr'$. We use here the standard terminology
(\Def{terminating}, \Def{confluent}, \Def{convergent}, \Def{branching
pair}, \Def{joinable}, \Def{normal form}, {\em etc.}) about rewrite
relations and rewrite systems~\cite{BN98}.

Given an operad $\Oca \simeq \Mag/_{\Congr}$ where $\Congr$ is an operad
congruence of $\Mag$, we say that $\Rew$ is an \Def{orientation} of
$\Congr$ if the reflexive, transitive, and symmetric closure of
$\RewContext$ is $\Congr$. We say that $\Rew$ is a \Def{convergent
orientation} if $\RewContext$ is convergent. When $\Rew$ is a convergent
orientation of $\Congr$, the set of all normal forms of $\RewContext$ is
a \Def{Poincaré-Birkhoff-Witt basis} of the operad $\Oca$ and its
elements are exactly the binary trees avoiding, as subtrees, the trees
appearing as left members in $\Rew$.

We shall use the following criterion to prove that a rewrite relation on
$\Mag$ is terminating.

\begin{Lemma} \label{lem:prefix_word_termination}
    Let $\Rew$ be a rewrite rule on $\Mag$. If for any
    $\Tfr, \Tfr' \in \Mag$ such that $\Tfr \Rew \Tfr'$ one has
    $\PrefixWord(\Tfr) > \PrefixWord(\Tfr')$, then the rewrite relation
    induced by $\Rew$ is terminating.
\end{Lemma}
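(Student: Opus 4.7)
The plan is to exhibit a strictly decreasing measure for $\RewContext$ in a well-order, which forces termination. Since $\Sfr \Rew \Sfr'$ preserves arity by hypothesis, so does $\RewContext$; for each arity $n$ the set $\Mag(n)$ is finite, so the lexicographic order restricted to the prefix words of its elements (all of common length $2n - 1$) is a well-order. It therefore suffices to verify that every single step $\Tfr \RewContext \Tfr'$ satisfies $\PrefixWord(\Tfr) > \PrefixWord(\Tfr')$.

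Unpacking one step, $\Tfr = \mathfrak{u} \circ_i (\Sfr \circ [\Rfr_1, \ldots, \Rfr_n])$ and $\Tfr' = \mathfrak{u} \circ_i (\Sfr' \circ [\Rfr_1, \ldots, \Rfr_n])$ for some $\Sfr \Rew \Sfr'$. The prefix word of a partial composition $\mathfrak{u} \circ_i \mathfrak{w}$ is obtained from $\PrefixWord(\mathfrak{u})$ by replacing its $i$-th $\Zero$ with $\PrefixWord(\mathfrak{w})$, so the words $\PrefixWord(\Tfr)$ and $\PrefixWord(\Tfr')$ agree on their outer blocks coming from $\mathfrak{u}$ and contain equal-length middle blocks $\PrefixWord(\Sfr \circ [\Rfr_1, \ldots, \Rfr_n])$ and $\PrefixWord(\Sfr' \circ [\Rfr_1, \ldots, \Rfr_n])$; the lex comparison reduces to these middles.

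The crux is thus: if $\PrefixWord(\Sfr) > \PrefixWord(\Sfr')$ with $|\Sfr| = |\Sfr'| = n$, then $\PrefixWord(\Sfr \circ [\Rfr_1, \ldots, \Rfr_n]) > \PrefixWord(\Sfr' \circ [\Rfr_1, \ldots, \Rfr_n])$ for every tuple $(\Rfr_1, \ldots, \Rfr_n)$. Let $k$ be the first disagreement, so necessarily $\PrefixWord(\Sfr)_k = \Two$ and $\PrefixWord(\Sfr')_k = \Zero$, and let $m$ be the number of $\Zero$s in the common length-$(k-1)$ prefix. Position $k$ in $\Sfr$ is then the root of a subtree $\mathfrak{s}_0$ of arity $p \geq 2$ whose leftmost branch has length $q \geq 1$, whereas position $k$ in $\Sfr'$ is the $(m+1)$-th leaf. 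Writing $t$ for the leftmost branch length of $\Rfr_{m+1}$, after substitution the two resulting prefix words share a common initial block $A$ coming from the first $k - 1$ letters, and then continue respectively with $\PrefixWord(\mathfrak{s}_0 \circ [\Rfr_{m+1}, \ldots, \Rfr_{m+p}])$, whose leftmost branch has length $q + t \geq t + 1$, and with $\PrefixWord(\Rfr_{m+1})$, whose leftmost branch has length exactly $t$. At position $|A| + t + 1$ the first word still carries $\Two$ (it is still on the leftmost branch) while the second carries the $\Zero$ ending $\Rfr_{m+1}$'s leftmost branch, giving the strict inequality.

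The delicate point is exactly this final step: when $\Rfr_{m+1}$ is itself nontrivial, both substituted words begin with a shared run of $\Two$s past $A$, and one must use the additional internal node at the root of $\mathfrak{s}_0$ to extend that run by one on the left-hand side and so settle the comparison.
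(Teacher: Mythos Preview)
The paper states this lemma without proof, so there is no argument to compare against. Your proof is correct: you reduce termination to showing that a single $\RewContext$-step strictly decreases the prefix word, handle the outer context $\mathfrak{u}$ by the obvious factorization of $\PrefixWord(\mathfrak{u}\circ_i \mathfrak{w})$, and then prove the key compatibility of the lexicographic order with complete composition. The analysis at the first point of disagreement~$k$ is the right one: the common prefix of length $k-1$ produces a common block~$A$ after substitution (same $\Zero$-positions replaced by the same $\Rfr_j$'s), and past~$A$ the comparison of a word beginning $\Two^{q+t}\Zero\cdots$ against one beginning $\Two^{t}\Zero\cdots$ with $q\geq 1$ settles the inequality. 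The case distinction you flag (nontrivial $\Rfr_{m+1}$) is exactly where a careless argument would fail, and your treatment handles it cleanly.
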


Moreover, we shall use the following result appearing in~\cite{Gir16}
specialized on rewrite relation on $\Mag$ to prove that a terminating
rewrite relation is convergent.

\begin{Lemma} \label{lem:degree_confluence}
    Let $\Rew$ be a rewrite rule on $\Mag$ wherein all trees $\Tfr$ and 
    $\Tfr'$ such that $\Tfr \Rew \Tfr'$ have degrees at most $\ell$. 
    Then, if the rewrite relation $\RewContext$ induced by $\Rew$ is
    terminating and all its branching pairs of degrees at most
    $2\ell - 1$ are joinable, $\RewContext$ is convergent.
\end{Lemma}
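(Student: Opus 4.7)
The approach is to invoke Newman's Lemma: a terminating rewrite relation is convergent as soon as it is locally confluent. Since termination of $\RewContext$ is assumed, it suffices to prove that every branching pair is joinable, and the task is then to reduce all such pairs to pairs of degree at most $2\ell - 1$, where the hypothesis applies. I would proceed by a case analysis on the relative position of the two redexes.

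Fix a branching pair $(\Tfr_1, \Tfr_2)$ arising from rewrites applied to a tree $\Tfr$ at internal nodes $v_1$ and $v_2$, via rules $\Sfr_1 \Rew \Sfr_1'$ and $\Sfr_2 \Rew \Sfr_2'$, each of degree at most $\ell$. If $v_1$ and $v_2$ lie in parallel subtrees of $\Tfr$, so that neither is a descendant of the other, the two rewrites act on disjoint regions and commute: each can be performed after the other to produce a common reduct. If $v_2$ lies strictly inside one of the subtrees $\Rfr_j$ grafted onto a leaf of $\Sfr_1$ at $v_1$ (or symmetrically), the rewrites are again in disjoint regions — the outer one merely transports $\Rfr_j$ to the corresponding leaf of $\Sfr_1'$ — and joinability follows in the same way.

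The critical case is when $v_2$ is an internal node of the pattern $\Sfr_1$ at $v_1$ (or $v_1$ of $\Sfr_2$ at $v_2$). Here one rewrite destroys the left-hand side of the other. The key observation is that the entire interaction is confined to the minimal subtree of $\Tfr$ containing both patterns: its degree is at most $\Deg(\Sfr_1) + \Deg(\Sfr_2) - 1 \leq 2\ell - 1$, since the two patterns share at least one common internal node. The induced branching pair on this small tree is therefore joinable by hypothesis, and the two joining sequences lift back to $\Tfr$ by embedding them into the unchanged surrounding context. Combining all cases yields local confluence, whence Newman's Lemma produces convergence. The main obstacle is the overlap case: verifying the degree bound precisely and justifying the compositional lifting of joinability from the overlap subtree back to $\Tfr$; the disjoint cases are essentially bookkeeping once the operadic composition has been unwound.
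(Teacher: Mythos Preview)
The paper does not supply its own proof of this lemma: it is stated with a reference to~\cite{Gir16} and used as a black box. Your outline is the standard Critical Pair Lemma argument (Newman's Lemma plus a case analysis on disjoint, nested, and overlapping redexes) specialized to tree rewriting in $\Mag$, and it is correct; in particular the degree bound $2\ell-1$ for the overlap case is exactly the point, since the two left-hand-side patterns share at least one internal node.

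One small remark on precision: in the overlap case, the ``minimal subtree containing both patterns'' should be understood as the tree rooted at the higher of $v_1,v_2$ whose internal nodes are exactly those of the two patterns, with fresh leaves grafted wherever either pattern had a leaf that is not an internal node of the other. Making this explicit is what guarantees both the degree bound and that the restricted branching pair is genuinely a branching pair for $\RewContext$ on a tree of degree $\leq 2\ell-1$, so that the hypothesis applies and the joining reductions lift by closure of $\RewContext$ under operadic contexts. With that clarification your argument is complete.
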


\section{Generalizations of the associative operad} \label{sec:CAs_d}
It is known that the rewrite rule $\Rew$
orienting~\eqref{equ:congruence_as} from left to right is a convergent
orientation of (\ref{equ:congruence_as}). Then, a Poincaré-Birkhoff-Witt
basis of $\As$ is the set of all right comb binary trees.

Let us now define for any $d \geq 1$ the \Def{$d$-comb associative
operad} $\CAs{d}$ as the quotient operad $\Mag/_{\CongrCAs{d}}$ where
$\CongrCAs{d}$ is the smallest operad congruence of $\Mag$
satisfying
\begin{equation} \label{equ:congruence_Asd}
    \underbrace{(\dots (\Product \circ_1 \Product) \circ_1 \dots)
        \circ_1 \Product}
    _{d \mbox{ \footnotesize operands}}
    \enspace \CongrCAs{d} \enspace
    \underbrace{\Product \circ_2
        (\dots \circ_2 (\Product \circ_2 \Product) \dots)}
    _{d \mbox{ \footnotesize operands}}.
\end{equation}
In words, \eqref{equ:congruence_Asd} says that the left and the right
comb binary trees of degree $d$ are equivalent for $\CongrCAs{d}$.
Notice that $\CongrCAs{1}$ is trivial so that $\CAs{1} = \Mag$ and that
$\CongrCAs{2}$ is the operad congruence defined
by~\eqref{equ:congruence_as} so that $\CAs{2} = \As$.

As shown by the following statement, the operads $\CAs{d}$ are related
to each other.

\begin{Proposition} \label{prop:quotients_CAs_d}
    For any $d \geq 3$, $\CAs{d}$ is a quotient operad of $\CAs{2d - 1}$
    and $\CAs{2}$ is a quotient operad of $\CAs{d}$.
\end{Proposition}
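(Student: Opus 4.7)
The plan is to reinterpret both claims as inclusions of operad congruences on $\Mag$. Write $L_d$ and $R_d$ for the left and right comb binary trees of degree $d$, so that by definition $\CongrCAs{d}$ is the smallest operad congruence satisfying $L_d \CongrCAs{d} R_d$. Since $\CAs{n} \simeq \Mag/_{\CongrCAs{n}}$, the statement that $\CAs{d}$ is a quotient of $\CAs{2d-1}$ is equivalent to $\CongrCAs{2d-1} \subseteq \CongrCAs{d}$, and the statement that $\CAs{2}$ is a quotient of $\CAs{d}$ is equivalent to $\CongrCAs{d} \subseteq \CongrCAs{2}$. In each case, it suffices to exhibit the single defining relation of the smaller congruence as a consequence of the larger one.

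For the first inclusion I would establish $L_{2d-1} \CongrCAs{d} R_{2d-1}$ via two successive applications of $L_d \CongrCAs{d} R_d$. The starting point is the decomposition $L_{2d-1} = L_d \circ_1 L_{d-1}$, which exposes an $L_d$-shaped pattern at the top of $L_{2d-1}$ with $L_{d-1}$ grafted at its leftmost leaf. Rewriting this top pattern to $R_d$ yields the intermediate tree $T := R_d \circ_1 L_{d-1}$, whose root has $L_{d-1}$ as its left subtree and $R_{d-1}$ as its right subtree. The crucial observation is that the same tree $T$ also admits the decomposition $T = L_d \circ_{d+1} R_{d-1}$, which reveals a fresh $L_d$-shaped pattern at the top of $T$ (formed by the root of $T$ together with the left spine of $L_{d-1}$) with $R_{d-1}$ now grafted at its rightmost leaf. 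A second rewrite $L_d \CongrCAs{d} R_d$ applied at this new pattern produces $R_d \circ_{d+1} R_{d-1} = R_{2d-1}$, completing the chain.

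For the second inclusion, since $\CAs{2}$ is the associative operad $\As$ in which every pair of binary trees of the same arity are identified, one has $L_d = R_d$ in $\As$; hence $L_d \CongrCAs{2} R_d$ and $\CongrCAs{d} \subseteq \CongrCAs{2}$. The only nontrivial content lies in the first inclusion, and specifically in recognizing the two equivalent decompositions $R_d \circ_1 L_{d-1} = L_d \circ_{d+1} R_{d-1}$ of the intermediate tree $T$; this is immediate from a schematic picture of $T$, or from a short comparison of prefix words.
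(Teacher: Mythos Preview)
Your proof is correct and follows exactly the same route as the paper's: the paper displays precisely the three-term chain $L_{2d-1} \CongrCAs{d} T \CongrCAs{d} R_{2d-1}$ with the same intermediate tree $T$ (root carrying $L_{d-1}$ on the left and $R_{d-1}$ on the right), and the second part is handled identically. Your write-up is simply more explicit, using the $\circ_i$ notation and spelling out the double decomposition $R_d \circ_1 L_{d-1} = T = L_d \circ_{d+1} R_{d-1}$ where the paper relies on a picture.
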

\begin{proof}
    Since
    \begin{equation} \label{equ:quotients_CAs_d}
        \begin{tikzpicture}[xscale=.23,yscale=.25,Centering]
            \node[rotate = 45,font=\tiny] at (1,-5.5)
                {\begin{math}d - 1\end{math}};
            \draw[rotate = 45,font=\tiny] (1-5.8,0.75-6.2)
                arc(180:90:.1) -- (2.5-5.8, 0.85-6.2) arc(-90:0:.1);
            \draw[rotate = 45] (4.2-5.8,0.75-6.2)
                arc(0:90:.1) -- (2.7-5.8, 0.85-6.2) arc(270:180:.1);
            \node[rotate = 45,font=\tiny] at (7,0)
                {\begin{math}d - 1\end{math}};
            \draw[rotate = 45] (1+2.4,0.75-6.5)
                arc(180:90:.1) -- (2.5+2.4, 0.85-6.5) arc(-90:0:.1);
            \draw[rotate = 45] (4.2+2.4,0.75-6.5)
                arc(0:90:.1) -- (2.7+2.4, 0.85-6.5) arc(270:180:.1);
            \node(0)at(0.00,-9.17){};
            \node(10)at(10.00,-1.83){};
            \node(2)at(2.00,-9.17){};
            \node(4)at(4.00,-7.33){};
            \node(6)at(6.00,-5.50){};
            \node(8)at(8.00,-3.67){};
            \node[NodeST](1)at(1.00,-7.33)
                {\begin{math}\Product\end{math}};
            \node[NodeST](3)at(3.00,-5.50)
                {\begin{math}\Product\end{math}};
            \node[NodeST](5)at(5.00,-3.67)
                {\begin{math}\Product\end{math}};
            \node[NodeST](7)at(7.00,-1.83)
                {\begin{math}\Product\end{math}};
            \node[NodeST](9)at(9.00,0.00)
                {\begin{math}\Product\end{math}};
            \draw[Edge](0)--(1);
            \draw[Edge,dotted](1)--(3);
            \draw[Edge](10)--(9);
            \draw[Edge](2)--(1);
            \draw[Edge](3)--(5);
            \draw[Edge](4)--(3);
            \draw[Edge](5)--(7);
            \draw[Edge](6)--(5);
            \draw[Edge,dotted](7)--(9);
            \draw[Edge](8)--(7);
            \node(r)at(9.00,1.38){};
            \draw[Edge](r)--(9);
        \end{tikzpicture}
        \enspace \CongrCAs{d} \enspace
        \begin{tikzpicture}[xscale=.21,yscale=.18,Centering]
            \node[rotate = -55,font=\tiny] at (9.2,-3.2)
                {\begin{math}d - 1\end{math}};
            \draw[rotate = -55] (0.5+5.5,0.75+4)
                arc(180:90:.1) -- (2.25+5.5, 0.85+4) arc(-90:0:.1);
            \draw[rotate = -55] (4.2+5.5,0.75+4)
                arc(0:90:.1) -- (2.45+5.5, 0.85+4) arc(270:180:.1);
            \node[rotate = 55,font=\tiny] at (0.8,-3)
                {\begin{math}d - 1\end{math}};
            \draw[rotate = 55] (0.5-4.5,0.75-4)
                arc(180:90:.1) -- (2.25-4.5, 0.85-4) arc(-90:0:.1);
            \draw[rotate = 55] (4.2-4.5,0.75-4)
                arc(0:90:.1) -- (2.45-4.5, 0.85-4) arc(270:180:.1);
            \node(0)at(0.00,-8.25){};
            \node(10)at(10.00,-8.25){};
            \node(2)at(2.00,-8.25){};
            \node(4)at(4.00,-5.50){};
            \node(6)at(6.00,-5.50){};
            \node(8)at(8.00,-8.25){};
            \node[NodeST](1)at(1.00,-5.50)
                {\begin{math}\Product\end{math}};
            \node[NodeST](3)at(3.00,-2.75)
                {\begin{math}\Product\end{math}};
            \node[NodeST](5)at(5.00,0.00)
                {\begin{math}\Product\end{math}};
            \node[NodeST](7)at(7.00,-2.75)
                {\begin{math}\Product\end{math}};
            \node[NodeST](9)at(9.00,-5.50)
                {\begin{math}\Product\end{math}};
            \draw[Edge](0)--(1);
            \draw[Edge,dotted](1)--(3);
            \draw[Edge](10)--(9);
            \draw[Edge](2)--(1);
            \draw[Edge](3)--(5);
            \draw[Edge](4)--(3);
            \draw[Edge](6)--(7);
            \draw[Edge](7)--(5);
            \draw[Edge](8)--(9);
            \draw[Edge,dotted](9)--(7);
            \node(r)at(5.00,2.06){};
            \draw[Edge](r)--(5);
        \end{tikzpicture}
        \enspace \CongrCAs{d} \enspace
        \begin{tikzpicture}[xscale=.23,yscale=.25,Centering]
            \node[rotate = -45,font=\tiny] at (9,-5.5)
                {\begin{math}d - 1\end{math}};
            \draw[rotate = -45] (1-0.5,0.75 + 0.5)
                arc(180:90:.1) -- (2.5-0.5, 0.85 + 0.5) arc(-90:0:.1);
            \draw[rotate = -45] (4.2-0.5,0.75 + 0.5)
                arc(0:90:.1) -- (2.7-0.5, 0.85 + 0.5) arc(270:180:.1);
            \node[rotate = -45,font=\tiny] at (3,0)
                {\begin{math}d - 1\end{math}};
            \draw[rotate = -45] (1+7.5,0.75+0.8)
                arc(180:90:.1) -- (2.5+7.5, 0.85+0.8) arc(-90:0:.1);
            \draw[rotate = -45] (4.2+7.5,0.75+0.8)
                arc(0:90:.1) -- (2.7+7.5, 0.85+0.8) arc(270:180:.1);
            \node(0)at(0.00,-1.83){};
            \node(10)at(10.00,-9.17){};
            \node(2)at(2.00,-3.67){};
            \node(4)at(4.00,-5.50){};
            \node(6)at(6.00,-7.33){};
            \node(8)at(8.00,-9.17){};
            \node[NodeST](1)at(1.00,0.00)
                {\begin{math}\Product\end{math}};
            \node[NodeST](3)at(3.00,-1.83)
                {\begin{math}\Product\end{math}};
            \node[NodeST](5)at(5.00,-3.67)
                {\begin{math}\Product\end{math}};
            \node[NodeST](7)at(7.00,-5.50)
                {\begin{math}\Product\end{math}};
            \node[NodeST](9)at(9.00,-7.33)
                {\begin{math}\Product\end{math}};
            \draw[Edge](0)--(1);
            \draw[Edge](10)--(9);
            \draw[Edge](2)--(3);
            \draw[Edge,dotted](3)--(1);
            \draw[Edge](4)--(5);
            \draw[Edge](5)--(3);
            \draw[Edge](6)--(7);
            \draw[Edge](7)--(5);
            \draw[Edge](8)--(9);
            \draw[Edge,dotted](9)--(7);
            \node(r)at(1.00,1.38){};
            \draw[Edge](r)--(1);
        \end{tikzpicture}
    \end{equation}
    where a dotted edge between two internal nodes denotes a left or a
    right comb tree of degree $d - 1$ (hence, the trees
    of~\eqref{equ:quotients_CAs_d} are of degree $2d - 1$), the relation
    $\Tfr \CongrCAs{2d - 1}\Tfr'$ implies $\Tfr \CongrCAs{d} \Tfr'$ for
    any trees $\Tfr$ and $\Tfr'$. Hence, $\CongrCAs{2d - 1}$ is finer
    than $\CongrCAs{d}$, whence the first part of the statement of the
    proposition. The second part of the statement of the proposition is
    a consequence of the fact that the relation
    $\Tfr \CongrCAs{d} \Tfr'$ implies $\Tfr \CongrCAs{2} \Tfr'$ for any
    trees $\Tfr$ and $\Tfr'$.
\end{proof}

\section{The \texorpdfstring{$3$}{3}-comb associative operad}
\label{sec:CAs_3}
We now focus on the study of the operad $\CAs{3}$. By definition, this
operad is the quotient of $\Mag$ by the operad congruence spanned by the
relation
\begin{equation} \label{equ:rew_1}
    \begin{tikzpicture}[xscale=.22,yscale=.23,Centering]
        \node(0)at(0.00,-5.25){};
        \node(2)at(2.00,-5.25){};
        \node(4)at(4.00,-3.50){};
        \node(6)at(6.00,-1.75){};
        \node[NodeST](1)at(1.00,-3.50){\begin{math}\Product\end{math}};
        \node[NodeST](3)at(3.00,-1.75){\begin{math}\Product\end{math}};
        \node[NodeST](5)at(5.00,0.00){\begin{math}\Product\end{math}};
        \draw[Edge](0)--(1);
        \draw[Edge](1)--(3);
        \draw[Edge](2)--(1);
        \draw[Edge](3)--(5);
        \draw[Edge](4)--(3);
        \draw[Edge](6)--(5);
        \node(r)at(5.00,1.5){};
        \draw[Edge](r)--(5);
    \end{tikzpicture}
    \enspace \Rew \enspace
    \begin{tikzpicture}[xscale=.22,yscale=.23,Centering]
        \node(0)at(0.00,-1.75){};
        \node(2)at(2.00,-3.50){};
        \node(4)at(4.00,-5.25){};
        \node(6)at(6.00,-5.25){};
        \node[NodeST](1)at(1.00,0.00){\begin{math}\Product\end{math}};
        \node[NodeST](3)at(3.00,-1.75){\begin{math}\Product\end{math}};
        \node[NodeST](5)at(5.00,-3.50){\begin{math}\Product\end{math}};
        \draw[Edge](0)--(1);
        \draw[Edge](2)--(3);
        \draw[Edge](3)--(1);
        \draw[Edge](4)--(5);
        \draw[Edge](5)--(3);
        \draw[Edge](6)--(5);
        \node(r)at(1.00,1.5){};
        \draw[Edge](r)--(1);
    \end{tikzpicture}\,.
\end{equation}
This rewrite rule is compatible with the lexicographic order on prefix 
words presented at the beginning of Section~\ref{sec:operad_Mag} in the 
sense that the prefix word of the left member of~\eqref{equ:rew_1} is 
lexicographically greater than the prefix word of the right one.

However, the rewrite relation $\RewContext$ induced by $\Rew$ is not
confluent. Indeed, we have
\begin{equation} \label{equ:branching_pair_CAs_3}
    \begin{tikzpicture}[xscale=.22,yscale=.22,Centering]
        \node(0)at(0.00,-7.20){};
        \node(2)at(2.00,-7.20){};
        \node(4)at(4.00,-5.40){};
        \node(6)at(6.00,-3.60){};
        \node(8)at(8.00,-1.80){};
        \node[NodeST](1)at(1.00,-5.40){\begin{math}\Product\end{math}};
        \node[NodeST](3)at(3.00,-3.60){\begin{math}\Product\end{math}};
        \node[NodeST](5)at(5.00,-1.80){\begin{math}\Product\end{math}};
        \node[NodeST](7)at(7.00,0.00){\begin{math}\Product\end{math}};
        \draw[Edge](0)--(1);
        \draw[Edge](1)--(3);
        \draw[Edge](2)--(1);
        \draw[Edge](3)--(5);
        \draw[Edge](4)--(3);
        \draw[Edge](5)--(7);
        \draw[Edge](6)--(5);
        \draw[Edge](8)--(7);
        \node(r)at(7.00,1.35){};
        \draw[Edge](r)--(7);
    \end{tikzpicture}
    \enspace \RewContext \enspace
    \begin{tikzpicture}[xscale=.22,yscale=.20,Centering]
        \node(0)at(0.00,-4.50){};
        \node(2)at(2.00,-4.50){};
        \node(4)at(4.00,-4.50){};
        \node(6)at(6.00,-6.75){};
        \node(8)at(8.00,-6.75){};
        \node[NodeST](1)at(1.00,-2.25){\begin{math}\Product\end{math}};
        \node[NodeST](3)at(3.00,0.00){\begin{math}\Product\end{math}};
        \node[NodeST](5)at(5.00,-2.25){\begin{math}\Product\end{math}};
        \node[NodeST](7)at(7.00,-4.50){\begin{math}\Product\end{math}};
        \draw[Edge](0)--(1);
        \draw[Edge](1)--(3);
        \draw[Edge](2)--(1);
        \draw[Edge](4)--(5);
        \draw[Edge](5)--(3);
        \draw[Edge](6)--(7);
        \draw[Edge](7)--(5);
        \draw[Edge](8)--(7);
        \node(r)at(3.00,1.74){};
        \draw[Edge](r)--(3);
    \end{tikzpicture}
    \qquad \mbox{and} \qquad
    \begin{tikzpicture}[xscale=.22,yscale=.22,Centering]
        \node(0)at(0.00,-7.20){};
        \node(2)at(2.00,-7.20){};
        \node(4)at(4.00,-5.40){};
        \node(6)at(6.00,-3.60){};
        \node(8)at(8.00,-1.80){};
        \node[NodeST](1)at(1.00,-5.40){\begin{math}\Product\end{math}};
        \node[NodeST](3)at(3.00,-3.60){\begin{math}\Product\end{math}};
        \node[NodeST](5)at(5.00,-1.80){\begin{math}\Product\end{math}};
        \node[NodeST](7)at(7.00,0.00){\begin{math}\Product\end{math}};
        \draw[Edge](0)--(1);
        \draw[Edge](1)--(3);
        \draw[Edge](2)--(1);
        \draw[Edge](3)--(5);
        \draw[Edge](4)--(3);
        \draw[Edge](5)--(7);
        \draw[Edge](6)--(5);
        \draw[Edge](8)--(7);
        \node(r)at(7.00,1.35){};
        \draw[Edge](r)--(7);
    \end{tikzpicture}
    \enspace \RewContext \enspace
    \begin{tikzpicture}[xscale=.22,yscale=.22,Centering]
        \node(0)at(0.00,-3.60){};
        \node(2)at(2.00,-5.40){};
        \node(4)at(4.00,-7.20){};
        \node(6)at(6.00,-7.20){};
        \node(8)at(8.00,-1.80){};
        \node[NodeST](1)at(1.00,-1.80){\begin{math}\Product\end{math}};
        \node[NodeST](3)at(3.00,-3.60){\begin{math}\Product\end{math}};
        \node[NodeST](5)at(5.00,-5.40){\begin{math}\Product\end{math}};
        \node[NodeST](7)at(7.00,0.00){\begin{math}\Product\end{math}};
        \draw[Edge](0)--(1);
        \draw[Edge](1)--(7);
        \draw[Edge](2)--(3);
        \draw[Edge](3)--(1);
        \draw[Edge](4)--(5);
        \draw[Edge](5)--(3);
        \draw[Edge](6)--(5);
        \draw[Edge](8)--(7);
        \node(r)at(7.00,1.5){};
        \draw[Edge](r)--(7);
    \end{tikzpicture}\,,
\end{equation}
and the two right members of~\eqref{equ:branching_pair_CAs_3} form a
branching pair which is not joinable.

In order to transform the rewrite relation induced by~\eqref{equ:rew_1} 
into a convergent one, we apply the Buchberger algorithm for
operads~\cite[Section 3.7]{DK10} with respect to the lexicographic order
on prefix words. Following this algorithm, we need to put the right
members of~\eqref{equ:branching_pair_CAs_3} in relation by $\Rew$. To
respect the lexicographic property of the prefix words, this leads to
the new relation
\begin{equation} \label{equ:rew_2}
    \begin{tikzpicture}[xscale=.22,yscale=.22,Centering]
        \node(0)at(0.00,-3.60){};
        \node(2)at(2.00,-5.40){};
        \node(4)at(4.00,-7.20){};
        \node(6)at(6.00,-7.20){};
        \node(8)at(8.00,-1.80){};
        \node[NodeST](1)at(1.00,-1.80){\begin{math}\Product\end{math}};
        \node[NodeST](3)at(3.00,-3.60){\begin{math}\Product\end{math}};
        \node[NodeST](5)at(5.00,-5.40){\begin{math}\Product\end{math}};
        \node[NodeST](7)at(7.00,0.00){\begin{math}\Product\end{math}};
        \draw[Edge](0)--(1);
        \draw[Edge](1)--(7);
        \draw[Edge](2)--(3);
        \draw[Edge](3)--(1);
        \draw[Edge](4)--(5);
        \draw[Edge](5)--(3);
        \draw[Edge](6)--(5);
        \draw[Edge](8)--(7);
        \node(r)at(7.00,1.5){};
        \draw[Edge](r)--(7);
    \end{tikzpicture}
    \enspace \Rew \enspace
    \begin{tikzpicture}[xscale=.22,yscale=.20,Centering]
        \node(0)at(0.00,-4.50){};
        \node(2)at(2.00,-4.50){};
        \node(4)at(4.00,-4.50){};
        \node(6)at(6.00,-6.75){};
        \node(8)at(8.00,-6.75){};
        \node[NodeST](1)at(1.00,-2.25){\begin{math}\Product\end{math}};
        \node[NodeST](3)at(3.00,0.00){\begin{math}\Product\end{math}};
        \node[NodeST](5)at(5.00,-2.25){\begin{math}\Product\end{math}};
        \node[NodeST](7)at(7.00,-4.50){\begin{math}\Product\end{math}};
        \draw[Edge](0)--(1);
        \draw[Edge](1)--(3);
        \draw[Edge](2)--(1);
        \draw[Edge](4)--(5);
        \draw[Edge](5)--(3);
        \draw[Edge](6)--(7);
        \draw[Edge](7)--(5);
        \draw[Edge](8)--(7);
        \node(r)at(3.00,1.74){};
        \draw[Edge](r)--(3);
    \end{tikzpicture}\,.
\end{equation}
The Buchberger algorithm applied on binary trees of degrees $5$, $6$,
and $7$ provides the new relations \\
\input{Buchberger_relations_5-7}

\noindent
We claim that the rewrite relation $\RewContext$ induced by
rewrite rule $\Rew$ satisfying~\eqref{equ:rew_1}, \eqref{equ:rew_2}, 
\eqref{equ:rew_3}---\eqref{equ:rew_11} is convergent. First, for every 
relation $\Tfr \Rew \Tfr'$, we have
$\PrefixWord(\Tfr) > \PrefixWord(\Tfr')$. Therefore, by
Lemma~\ref{lem:prefix_word_termination}, $\RewContext$ is terminating.
Moreover, the greatest degree of a tree appearing in $\Rew$ is~$7$ so
that, from Lemma~\ref{lem:degree_confluence}, to show that $\RewContext$
is convergent, it is enough to prove that each tree of degree at most
$13$ admits exactly one normal form. Equivalently, this amounts to
show that the number of normal forms of trees of arity $n$ is equal
to $\#\CAs{3}(n)$. By computer exploration, we get the same sequence
\begin{equation} \label{equ:dimensions_CAs_3}
    1, 1, 2, 4, 8, 14, 20, 19, 16, 14, 14, 15, 16, 17
\end{equation}
for $\#\CAs{3}(n)$ and for the numbers of normal forms of arity $n$,
when $ 1 \leq n \leq 14$. Hence, we get our following main result.

\begin{Theorem} \label{thm:convergent_rewrite_rule_CAs_3}
    The rewrite rule $\Rew$ satisfying~\eqref{equ:rew_1},
    \eqref{equ:rew_2}, \eqref{equ:rew_3}---\eqref{equ:rew_11} is a
    convergent orientation of the congruence $\CongrCAs{3}$
    of~$\CAs{3}$.
\end{Theorem}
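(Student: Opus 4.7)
The plan is to establish three properties: that $\Rew$ is an orientation of $\CongrCAs{3}$ (each pair $\Tfr \Rew \Tfr'$ lies in $\CongrCAs{3}$, and $\CongrCAs{3}$ is generated by such pairs), that $\RewContext$ is terminating, and that $\RewContext$ is confluent. Together these imply $\RewContext$ is a convergent orientation.

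First I would dispatch termination with Lemma~\ref{lem:prefix_word_termination}: one inspects each of the eleven rules \eqref{equ:rew_1}, \eqref{equ:rew_2}, \eqref{equ:rew_3}, \dots, \eqref{equ:rew_11} and verifies that the prefix word of the left member is lexicographically greater than that of the right member. Since every rule was introduced through the Buchberger procedure with respect to exactly this order, this check is built into the construction and is just a finite per-rule verification. For the orientation property, rule~\eqref{equ:rew_1} is literally the defining relation of $\CongrCAs{3}$; for each of the ten remaining rules I would exhibit a zig-zag of $\CongrCAs{3}$-rewrites (using only applications of \eqref{equ:rew_1} and its inverse in context) connecting the two members — these zig-zags are precisely what the Buchberger algorithm produces when it resolves a branching pair, so each added rule is $\CongrCAs{3}$-valid by construction. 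Conversely, since \eqref{equ:rew_1} belongs to $\Rew$, the symmetric transitive closure of $\RewContext$ contains a generator of $\CongrCAs{3}$, and therefore equals it.

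The heart of the argument is confluence. The maximum degree occurring among the left and right members of the eleven rules is $\ell = 7$, so by Lemma~\ref{lem:degree_confluence} it suffices to verify that all branching pairs of $\RewContext$ of degree at most $2\ell - 1 = 13$ are joinable. Rather than enumerating these branching pairs individually — there are many, and \eqref{equ:branching_pair_CAs_3} already shows the kind of non-trivial overlap that arises — I would use the equivalent reformulation: on each arity $n$ with $1 \leq n \leq 14$, the set of normal forms of $\RewContext$ is a quotient of $\CAs{3}(n)$ by the orientation property, so $\#\{\text{normal forms of arity } n\} \geq \#\CAs{3}(n)$, and equality for all these $n$ forces joinability of every branching pair of degree at most $13$ (a branching pair that failed to join would produce two distinct normal forms in the same congruence class). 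So I would run two independent computer computations: one enumerating normal forms of $\RewContext$ (binary trees avoiding the eleven left members as subtrees) up to arity $14$, and one computing $\#\CAs{3}(n)$ directly from the presentation of $\CAs{3}$ (for instance by saturating the congruence on $\Mag(n)$). Matching the two sequences against~\eqref{equ:dimensions_CAs_3} finishes the argument.

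The main obstacle is the last step: the dimensions of $\CAs{3}$ are genuinely irregular and must be computed by brute force up to arity $14$, which is at the edge of what is feasible because the number of binary trees grows as a Catalan number. The delicate part is not the proof structure but trusting (and double-checking) the computer enumeration of $\#\CAs{3}(n)$; the enumeration of normal forms, being purely a pattern-avoidance count on binary trees, is comparatively routine.
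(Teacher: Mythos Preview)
Your proposal matches the paper's own argument essentially step for step: termination via Lemma~\ref{lem:prefix_word_termination} by checking prefix words on each rule, confluence via Lemma~\ref{lem:degree_confluence} with $\ell = 7$, and the replacement of the branching-pair check by the comparison of the normal-form count with $\#\CAs{3}(n)$ for $1 \leq n \leq 14$, both obtained by computer. The only addition you make is to spell out the orientation property explicitly (that the added rules lie in $\CongrCAs{3}$ by construction of the Buchberger procedure), which the paper leaves implicit; one phrasing slip is that normal forms surject onto $\CAs{3}(n)$ rather than being a quotient of it, but your inequality $\#\{\text{normal forms}\} \geq \#\CAs{3}(n)$ and its use are correct.
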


The rewrite rule $\Rew$ has, arity by arity, the cardinalities
\begin{equation}
    0, 0, 0, 1, 1, 2, 3, 4, 0, \dots~.
\end{equation}
We obtain from Theorem~\ref{thm:convergent_rewrite_rule_CAs_3} also
the following consequences.

\begin{Proposition} \label{prop:PBW_basis_CAs_3}
    The set of the trees avoiding as subtrees the ones appearing as
    left members of $\Rew$ is a Poincaré-Birkhoff-Witt basis
    of~$\CAs{3}$.
\end{Proposition}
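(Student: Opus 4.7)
The plan is to obtain this proposition as a direct consequence of Theorem~\ref{thm:convergent_rewrite_rule_CAs_3} together with the general principle recalled in Section~\ref{sec:operad_Mag}: whenever $\Rew$ is a convergent orientation of an operad congruence $\Congr$ on $\Mag$ such that $\Mag/_\Congr \simeq \Oca$, the normal forms of the induced rewrite relation $\RewContext$ form a Poincaré-Birkhoff-Witt basis of $\Oca$, and these normal forms coincide with the binary trees that avoid, as subtrees, the trees appearing as left members in $\Rew$.

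First, I would invoke Theorem~\ref{thm:convergent_rewrite_rule_CAs_3} to assert that the rewrite rule $\Rew$ satisfying~\eqref{equ:rew_1}, \eqref{equ:rew_2}, and \eqref{equ:rew_3}--\eqref{equ:rew_11} is a convergent orientation of the operad congruence $\CongrCAs{3}$ defining $\CAs{3} \simeq \Mag/_{\CongrCAs{3}}$. Next, I would apply the general principle mentioned above to conclude that the set $\mathcal{N}$ of normal forms of $\RewContext$ is a Poincaré-Birkhoff-Witt basis of $\CAs{3}$.

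Finally, it remains to identify $\mathcal{N}$ with the set of binary trees avoiding as subtrees the left members of $\Rew$. By the very definition of $\RewContext$ from $\Rew$ given in Section~\ref{sec:operad_Mag}, a tree $\Tfr$ is a normal form if and only if no subtree of $\Tfr$ is equal to a left member of $\Rew$, that is, if and only if $\Tfr$ avoids these left members as subtrees. This yields the claimed description of the basis.

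There is no substantial obstacle here: the statement is really a restatement of the content of Theorem~\ref{thm:convergent_rewrite_rule_CAs_3} together with the standard rewriting-theoretic dictionary between convergent orientations and PBW bases already set up earlier in the paper. The only mild subtlety worth spelling out in the write-up is the equivalence between "being a normal form of $\RewContext$" and "avoiding the left members of $\Rew$ as subtrees", which follows directly from how $\RewContext$ is defined as the contextual closure of $\Rew$.
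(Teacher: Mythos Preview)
Your proposal is correct and matches the paper's approach exactly: the paper presents Proposition~\ref{prop:PBW_basis_CAs_3} without a separate proof, as an immediate consequence of Theorem~\ref{thm:convergent_rewrite_rule_CAs_3} via the general principle stated in Section~\ref{sec:operad_Mag} that a convergent orientation yields a PBW basis given by the trees avoiding the left members of~$\Rew$. There is nothing to add.
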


From Proposition~\ref{prop:PBW_basis_CAs_3}, and by using a result
of~\cite{Gir18} describing a system of equations for the generating
series of syntax trees avoiding some sets of subtrees, we obtain the
following result.

\begin{Proposition} \label{prop:Hilbert_series_CAs_3}
    The Hilbert series of $\CAs{3}$ is
    \begin{equation} \label{equ:Hilbert_series_CAs_3}
        \HilbertSeries_{\CAs{3}}(t) = \frac{t}{(1 - t)^2}
        \left(1 - t + t^2 + t^3 + 2t^4 + 2t^5 - 7t^7 - 2t^8 + t^9 +
        2t^{10} + t^{11}\right).
    \end{equation}
\end{Proposition}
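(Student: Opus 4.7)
The plan is to combine Proposition~\ref{prop:PBW_basis_CAs_3} with the tree-enumeration machinery of~\cite{Gir18}. By Proposition~\ref{prop:PBW_basis_CAs_3}, the Hilbert series $\HilbertSeries_{\CAs{3}}(t)$ coincides with the generating series counting, by number of leaves, the binary trees avoiding as subtrees the eleven explicit left members of the rewrite rule $\Rew$ listed in~\eqref{equ:rew_1}, \eqref{equ:rew_2}, and~\eqref{equ:rew_3}--\eqref{equ:rew_11}. The question is thereby reduced to the enumeration of a finite pattern-avoiding class of binary trees.

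I would then invoke the construction of~\cite{Gir18}, which associates with any finite antichain of forbidden trees a system of polynomial equations whose distinguished solution is the generating series of the avoiders. Concretely, one decomposes every avoider according to its root by splitting off its left and right children, and introduces a finite family of auxiliary series indexed by proper prefixes of the forbidden patterns in order to prevent those prefixes from completing into a full forbidden pattern. Since all eleven patterns have degree at most seven, this system is finite and fully explicit. It then remains to eliminate the auxiliary unknowns and solve for $\HilbertSeries_{\CAs{3}}(t)$; the fact that the announced closed form~\eqref{equ:Hilbert_series_CAs_3} is rational indicates that after elimination the system collapses to a linear equation in $\HilbertSeries_{\CAs{3}}(t)$, which is consistent with the eventually linear growth of $\#\CAs{3}(n)$ apparent in~\eqref{equ:dimensions_CAs_3} and encoded by the prefactor $t/(1-t)^2$.

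The expected main obstacle is not conceptual but combinatorial bookkeeping: with eleven forbidden patterns whose shapes overlap considerably, one must correctly track which partial matches may persist upon descent into subtrees, so as to neither over- nor under-count. As a final sanity check, the Taylor expansion of~\eqref{equ:Hilbert_series_CAs_3} should reproduce the first fourteen terms $1, 1, 2, 4, 8, 14, 20, 19, 16, 14, 14, 15, 16, 17$ of the sequence~\eqref{equ:dimensions_CAs_3}.
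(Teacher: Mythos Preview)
Your proposal is correct and follows essentially the same route as the paper: reduce to the Poincar\'e--Birkhoff--Witt basis of Proposition~\ref{prop:PBW_basis_CAs_3}, then feed the eleven forbidden left-hand sides into the pattern-avoidance machinery of~\cite{Gir18} to extract the generating series. The paper does not spell out the intermediate system of equations either, so your level of detail already matches (indeed exceeds) what is given there.
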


For $n \leq 10$, the dimensions of $\CAs{3}(n)$ are provided by
Sequence~\eqref{equ:dimensions_CAs_3} and for all $n \geq 11$, the
Taylor expansion of~\eqref{equ:Hilbert_series_CAs_3} shows that
$\# \CAs{3}(n) = n + 3$.

\section*{Perspectives}
Our first axis of perspectives consists in collecting properties about
the operads $\CAs{d}$. A natural question consists in finding all the
morphisms between the operads $\CAs{d}$. Some surjective morphisms are
described by Proposition~\ref{prop:quotients_CAs_d} and we can hope to a
full description of these, as well as some possible injections.
Moreover, we can try to obtain a convergent orientation of
$\CongrCAs{d}$ and general expressions of the Hilbert series of
$\CAs{d}$ when $d \geq 4$. By computer exploration, we have the sequence
\begin{equation}
    1, 1, 2, 5, 13, 35, 96, 264, 724, 1973, 5355, 14390
\end{equation}
for the first dimensions for $\CAs{4}$. By applying the Buchberger
algorithm on trees of degrees until $10$, we obtain that a convergent
orientation of $\CongrCAs{4}$ has, arity by arity, the sequence
\begin{math}
    0, 0, 0, 0, 1, 1, 0, 3, 4, 5, 18, 22
\end{math}
for its first cardinalities. Moreover, for $\CAs{5}$, we get the
sequence
\begin{equation}
    1, 1, 2, 5, 14, 41, 124, 384, 1210, 3861, 12440
\end{equation}
of dimensions and the first cardinalities
\begin{math}
    0, 0, 0, 0, 0, 1, 1, 0, 0, 4, 5
\end{math}
for any convergent orientation of $\CongrCAs{5}$. Finally, for
$\CAs{6}$, we get the sequence
\begin{equation}
    1, 1, 2, 5, 14, 42, 131, 420, 1375, 4576, 15431
\end{equation}
of dimensions and the first cardinalities
\begin{math}
    0, 0, 0, 0, 0, 0, 1, 1, 0, 0, 0
\end{math}
for any convergent orientation of $\CongrCAs{6}$. We can notice that
only $\CAs{3}$ seems to have oscillating first dimensions.

A second axis concerns a complete understanding of $\CAs{3}$. We can
try to construct an explicit basis of this operad.
Proposition~\ref{prop:PBW_basis_CAs_3} describes a basis in terms of
trees avoiding some patterns but, we can hope to find a simpler
description. This includes the description of a family of combinatorial
objects forming a basis of $\CAs{3}$ and an adequate definition of a
partial composition map $\circ_i$ on these. Moreover, a natural
question is to explore the suboperads $\CAs{3}$ in the category of
vector spaces.

In a last axis, we can consider further generalizations of $\As$ being
quotients of $\Mag$ by congruences defined by identifying certain binary
trees of a same fixed degree. A possible question is, as presented in
the introduction, to investigate if combinatorial properties of the
trees belonging to a same equivalence class imply algebraic properties
on the obtained operads.

\bibliographystyle{plain}
\bibliography{Bibliography}

\end{document}